\documentclass{amsart}

\newif\ifdraft\draftfalse
\newif\ifcite\citefalse
\newif\ifblow\blowfalse

\makeatletter
\@namedef{subjclassname@2020}{\textup{2020} Mathematics Subject Classification}
\makeatother

\usepackage{amsfonts,amssymb, amsmath}
\usepackage{oldgerm}
\usepackage{url}
\usepackage{amscd}
\usepackage{mathrsfs} 
\usepackage[hyperindex]{hyperref}  
\usepackage[alphabetic]{amsrefs}
\ifdraft\ifcite\usepackage{showkeys}\else\usepackage[notcite,notref]{showkeys}\fi\fi

\setcounter{MaxMatrixCols}{10}

\newtheorem{proposition}[equation]{Proposition}
\newtheorem{theorem}[equation]{Theorem}

\theoremstyle{definition}

\theoremstyle{remark}

\newtheorem{remark}[equation]{Remark}

\newtheorem{question}[equation]{Question}

\numberwithin{equation}{section}



\def\bc{\begin{cases}}
\def\ec{\end{cases}}

\def\a{\alpha}

\newcommand\om{\omega}

\def\Bbb{\mathbb}

\def\t{\tilde}

\def\ch{{\mathcal H}}

\def\bc{{\mathbb C}}

\def\br{{\mathbb R}}

\def\bz{{\mathbb Z}}

\def\er{\eqref}

\def\lp2{L_pH_{2p}}

\def\beq{\begin{equation}}
\def\eeq{\end{equation}}
\def\bal{\begin{align*}}
\def\eal{\end{align*}}
\def\baln{\begin{align}}
\def\ealn{\end{align}}

\def\beg{\begin{gather*}}
\def\eng{\end{gather*}}
\def\bqu{\begin{question}}
\def\equ{\end{question}}

\def\ban{\begin{proof}[Answer]}
\def\ean{\end{proof}}

\def\ra{\Rightarrow}

\def\p{\partial}

\def\on{\operatorname}

\def\bqu{\begin{question}}
\def\equ{\end{question}}

\def\0110{\begin{matrix} 0 & 1\\1&0\end{matrix}}

\def\t{\tilde}

\def\fg{\mathfrak{g}}
\def\fh{\mathfrak{h}}

\def\fl{\mathfrak{l}}

\def\fn{\mathfrak{n}}

\def\fs{\mathfrak{s}}

\def\ban{\begin{proof}[Answer]}
\def\ean{\end{proof}}

\def\ben{\begin{equation}}
\def\een{\end{equation}}

\def\la{\langle}
\def\ra{\rangle}

\def\j1{{(j+1)}}

\def\f32{{}_3F_2}

\newcommand{\fhr}{{\mathfrak h}^{\rm reg}}

\begin{document}

\title
{Blowup masses of Toda systems corresponding to the Weyl groups}

\author{Zhaohu Nie}
\email{zhaohu.nie@usu.edu}
\address{Department of Mathematics and Statistics, Utah State University, Logan, UT 84322-3900, USA}

\subjclass[2020]{35B44, 17B80, 35J47}

\begin{abstract} 
Toda systems are generalizations of the Liouville equation to systems using simple Lie algebras. We study the blowup phenomena of their solutions by giving concrete examples demonstrating blowup masses corresponding to 
the Weyl groups. 
\end{abstract}

\maketitle

\section{Introduction}
As a motivation for the subject of study in this paper, blowup phenomena of Toda systems in dimension 2, we first recall the classical scalar curvature equations in dimensions $\geq 3$ and dimension 2, and their corresponding blowup masses. 

Conformal geometry and the related Yamabe and prescribed scalar curvature problems \cite{SY} are classical topics in differential geometry and analysis of PDEs. Let $(M, g)$ be a Riemannian manifold, and $\tilde g$ a conformal metric of $g$. The corresponding scalar curvatures $R$ and $\t R$ are related as follows. If $\dim M = n\geq 3$, write 
$$
\t g = u^{\frac{4}{n-2}} g, \quad u\in C^\infty(M),\ u>0,
$$
then 
$$
4\frac{n-1}{n-2} \Delta_g u  + \t R u^{\frac{n+2}{n-2}} = Ru,
$$
where $\Delta_g$ is the Beltrami-Laplace operator of $g$. 
In particular, if $g_0$ is the Euclidean metric on $\br^n$ with $R=0$, and we would like $u^{\frac{4}{n-2}} g_0$ to have the scalar curvature of the standard round metric $g_1$ on the sphere $S^n$ with $\t R = n(n-1)$, we have the equation, on $\br^n$, 
\begin{equation}\label{sceq}
\Delta u + \frac{n(n-2)}{4} u^{\frac{n+2}{n-2}} = 0,
\end{equation}
where $\Delta$ is the usual Laplacian on $\br^n$. 
   
From the stereographic projection followed by a translation and a scaling, some explicit solutions can be computed to be 
\begin{equation}\label{ula}
u_{\lambda, a}(x) = \left(\frac{2\lambda}{1+\lambda^2 |x-a|^2}\right)^{\frac{n-2}{2}}, \quad \lambda>0,\ a\in \br^n.
\end{equation}
It is proved in \cites{GNN, CGS, S} that these are the only positive solutions of \eqref{sceq} on $\br^n$. 

For these solutions, we have
\begin{equation}\label{massn}
\int_{\br^n} u^{\frac{2n}{n-2}}_{\lambda, a} = \alpha_n = \frac{2\pi^{\frac{n+1}{2}}}{\Gamma(\frac{n+1}{2})},
\end{equation}
the surface area of $S^n$. 
In \eqref{ula}, we see that as $\lambda\to \infty$, $u_{\lambda, a}$ get concentrated at $x=a$, and we 
actually have 
\begin{equation}\label{scalar local}
\lim_{r\to 0}\lim_{\lambda\to \infty} \int_{B_r(a)} u_{\lambda, a}^{\frac{2n}{n-2}} = \alpha_n,
\end{equation}
where $B_r(a)$ is the ball centered at $a$ with radius $r>0$. For this blowup behavior, we call the number $\alpha_n$ the blowup mass. 




For the corresponding conformal geometry in dimension 2, it is better to use
$$
\t g = e^{2v} g,\quad v\in C^\infty(M),
$$
and the Gaussian curvature $K=\frac{R}{2}$. Then the equation corresponding to \eqref{sceq} is 
$$
\Delta_g v + \t K e^{2v} = K,
$$
which is called the Liouville equation. 

In the dimension 2 case, it is best to adapt the complex point of view by considering $\br^2$ as $\bc$ with complex coordinate $z$, and use the Riemann sphere $\bc P^1$ endowed with the Fubini-Study Hermitian metric $\frac{dz\otimes d\bar z}{(1+|z|^2)^2}$, whose Gaussian curvature is $4$.

Then, for $e^{2v} g_0$ on $\br^2$ to have Gaussian curvature $4$ as $\bc P^1$, the equation is 
\begin{equation}\label{Liouv}
\Delta v + 4e^{2v} = 0
\end{equation}
on $\br^2$. The solutions with finite energy, that is, such that
$$
\int_{\br^2}e^{2v}<\infty
$$
are known \cite{CL0} to be 
\beq\label{liou sol}
v_{\lambda, a}(x) = \log \frac{\lambda}{1+\lambda^2|x-a|^2},\quad \lambda>0,\ a\in \br^2.
\eeq
Similarly to \eqref{massn}, we have 
$$
\int_{\br^2} 4e^{2v_{\lambda, a}} = 4\pi.
$$

In this dimension 2 case, we can also allow some singularity at the origin. Let $\gamma>-1$, and denote $\mu=\gamma+1>0$. We require that $e^{2v}g_0$ have a conical singularity at the origin with cone angle $2\mu\pi$ and have constant Gaussian curvature 4 elsewhere. The generalization of \eqref{Liouv} to this case is 
\beq\label{compare Liouv}
\Delta v + 4e^{2v} = 2\pi\gamma \delta_0,
\eeq
where $\delta_0$ is the Dirac delta measure at the origin. 
The general solutions \cite{PT} are now 
$$
v_{\lambda,c}(z) = \log \frac{\lambda|z|^\gamma}{1 + \lambda^2|\frac{z^\mu}{\mu}+c|^2},\quad \lambda>0,\ c\in \bc,
$$ 
where $c=0$ if $\gamma\notin \bz_{\geq 0}$ (for the solution to be defined on $\br^2\backslash\{0\}$). For this solution, we have 
\begin{equation}\label{liouv4}
\int_{\br^2}4e^{2v_{\lambda,c}} = 4\mu\pi,
\end{equation}
which is the Gauss-Bonnet theorem for $\bc P^1$ with the same conical singularities at 0 and $\infty$. 

For the purpose of this paper, even if $\gamma\in \bz_{\geq 0}$, we will only consider the special solution of $c=0$ from now on and write $v_\lambda=v_{\lambda,0}$. 
By a direct calculation, we have the blowup behavior
\begin{equation}\label{Liouv local}
\lim_{r\to 0}\lim_{\lambda\to \infty} \int_{B_r(0)} e^{2v_\lambda} = \mu\pi.
\end{equation}

Note that in these cases of scalar curvature equations and Liouville equation, the local blowup masses in \eqref{scalar local} and \eqref{Liouv local} are the same as the global masses in \eqref{massn} and \eqref{liouv4}. 



Now, we introduce the subject of study in this paper. For a complex simple Lie algebra $\fg$ of rank $n$ with Cartan matrix $(a_{ij})$, we consider the following Toda system of real-valued unknowns $u_i$ on the plane $\br^2$ with singularities at the origin and with finite energies
\begin{equation}\label{Toda}
\begin{cases}
\displaystyle
\Delta u_i + 4\sum_{j=1}^n a_{ij} e^{u_j} = 4\pi \gamma_i \delta_0 \quad\text{on }\br^2, & \gamma_i> -1,\\
\displaystyle
\int_{\br^2} e^{u_i} \,dx <\infty, & 1\leq i\leq n.
\end{cases}
\end{equation}

It is  convenient to introduce 
$$
U_i = \sum_{j=1}^n a^{ij} u_j,\quad 1\leq i\leq n, 
$$
where $(a^{ij})=(a_{ij})^{-1}$ is the inverse Cartan matrix. Then $u_i = \sum_{j=1}^n a_{ij}U_j$. 
Using $\Delta = 4 \frac{\p}{\p z} \frac{\p}{\p \bar z}$ with $\frac{\p}{\p z}= \frac{1}{2}(\frac{\p}{\p x} - i\frac{\p}{\p y})$ and $\frac{\p}{\p \bar z}= \frac{1}{2}(\frac{\p}{\p x} + i\frac{\p}{\p y})$, 
the Toda system \eqref{Toda} becomes
$$
 U_{i,z\bar z} + e^{u_i} = \pi\gamma^i \delta_0,
$$
with $\gamma^i = \sum_{j=1}^n a^{ij} \gamma_j$.
In the Liouville case  \eqref{compare Liouv}, $\fg=\fs\fl_2$, $a_{11}=2$, $v=U_1$, $2v=u_1$, and $\gamma=\gamma_1$.

The classification of solutions to \eqref{Toda} on $\br^2\backslash\{0\}$  is done in \cite{KLNW}, and we also have the quantization result for their masses. These results are the basis of the current paper, so we state them now. In Section 2, we will state the Lie-theoretic setup in more detail. 

For the complex simple Lie algebra $\fg$ of rank $n$, we choose a Cartan subalgebra $\fh$, so we have a root space decomposition 
$$\fg = \fh \oplus \bigoplus_{\alpha \in \Delta} \fg_\alpha,$$
where $\Delta$ is the root system, and $\fg_\a$ is the root space for $\alpha\in \Delta$. 
Recall that the roots are linear functions on the Cartan subalgebra $\mathfrak{h},$ and for $X_{\alpha} \in \mathfrak{g}_{\alpha}$ and $H \in \mathfrak{h},$
we have $[H, X_{\alpha}] = \alpha(H)X_{\alpha}.$ 
We have $\dim_\bc \fh=n$ and $\dim_{\mathbb{C}} \mathfrak{g}_{\alpha} = 1$ for all $\alpha \in \Delta$. 

The real Cartan subalgebra is 
\begin{equation*}
\fh_0 = \{H\in \fh \,|\, \a(H)\in \br,\ \forall \a\in \Delta\}.
\end{equation*}
It is know that $\dim_\br \fh_0=n$. 
Let $\mathfrak{h}_0^{\prime} := \mbox{Hom}(\mathfrak{h},\mathbb{R})$ be the real dual space of $\mathfrak{h}_0$, and $\langle \cdot,\cdot\rangle$ the pairing of $\mathfrak{h}_0^{\prime}$ and $\mathfrak{h}_0$.  
 Clearly $\Delta\subset \fh_0'$. 

For the root system $\Delta$, we choose a decomposition $\Delta = \Delta^+\cup \Delta^-$ into positive and negative roots. Let 
$$
\Pi=\{\alpha_1,\dots,\alpha_n\}\subset \Delta^+
$$
be the corresponding simple roots. 

We choose a Chevalley basis 
\beq\label{chev basis}
\{e_{\alpha} \in \mathfrak{g}_{\alpha}, \alpha \in \Delta;\ h_{\alpha_i} \in \mathfrak{h}_0, 1 \leq i \leq n\}
\eeq
for $\fg$ (see \cite{Humphreys}*{Theorem 25.2} and \cite{Knapp}*{Theorem 6.6}).

Introduce 
$\mu_i = \gamma_i + 1 >0,\ 1\leq i\leq n,$
and define 
$w_0\in \fh_0$ as the unique element such that 
\begin{equation}
\label{def w0}
\la \alpha_i, w_0\ra = \mu_i, \quad 1\leq i\leq n. 
\end{equation}

Let $G$ be a connected complex Lie group whose Lie algebra is $\fg$. The classical Iwasawa decomposition \cite{Knapp}*{Theorem 6.46} says that $G=KAN$  with the subgroups $K$ maximally compact, $A$ abelian and $N$ nilpotent. It is well known that the groups $A$ and $N$ are {\it simply connected.} 

Let $V_i$ be the $i$-th fundamental representation of $\mathfrak{g}$ and denote the highest weight vector by $|i \rangle.$ Let $V_i^*$ be the dual right representation of $\mathfrak{g}$ and choose the lowest weight vector denoted by by $\langle i |$ so that the pairing 
$\langle i||i\rangle = 1.$

An essential role in the solution of \eqref{Toda} is played by the following. 
Let $\Phi: \bc\backslash \br_{\leq 0}\to N$ be the unique holomorphic solution of 
\begin{equation}\label{Phi}
\begin{cases}
\displaystyle
\Phi^{-1} \Phi_z = \sum_{i=1}^n z^{\gamma_i} e_{-\alpha_i} \quad \text{on }\bc\backslash \br_{\leq 0},\\
\displaystyle\lim_{z\to 0}\Phi(z) = Id,
\end{cases}
\end{equation}
where $Id\in G$ is the identity element (the limit exists because $\gamma_i>-1$). 

Then \cite{KLNW}*{Theorem 1.5} proves that the solutions to \eqref{Toda} are 
\beq
U_i= 2\gamma^i \log |z| -\log\la i|\Phi^* C^*\Lambda^2 C \Phi|i\ra,
\label{good oldie}
\eeq
where $C \in N$ and $\Lambda \in A$, and ${}^*$ is defined in \eqref{* on gp}. In this paper, we will only use $C=Id\in N$. 

Furthermore, we have the global masses \cite{KLNW}*{Theorem 1.1}
\beq\label{global mass}
\frac{1}{\pi}\int_{\br^2} e^{u_i} =  \langle \omega_i - \kappa \omega_i, w_0 \rangle,
\eeq
where $\kappa$ is the longest element in the Weyl group of $\fg$ and $\omega_i$ is the $i$th fundamental weight. 
  
Unlike the cases of the scalar curvature equations \eqref{scalar local} and the Liouville equation \eqref{Liouv local}, one key new phenomenon for the Toda system is that the local blowup masses are more varied than the global mass \eqref{global mass}. 

A general expectation is that the set of blowup masses 
is closely related 
to the Weyl group $W$ of $\fg$. This phenomenon has been studied in  \cite{LYZ} for Lie algebras of types $A$, $B$ and $C$ and even in some cases of affine Lie algebras in \cite{CNY}.  
The purpose of this paper is to construct concrete examples that demonstrate such local masses corresponding to the elements of the Weyl group as 
$$
\big\{(\la \omega_1 - \tau\omega_1, w_0\ra, \cdots, \la \omega_n - \tau\omega_n, w_0\ra)\,|\, \tau\in W\big\}.
$$

Let 
\beq\label{dom cham}
C_0 = \{H\in \fh_0 \,|\, \la\a_i, H\ra >0,\, 1\leq i\leq n\}
\eeq
be 
the dominant Weyl chamber. 

Here is our main result, which demonstrates the role of the Weyl group in the blowup behavior.


\begin{theorem}\label{main thm}
Let $\tau\in W$ be a Weyl group element, and let $H\in \tau C_0$, the image of $C_0$ under $\tau$. For $\lambda>0$, consider the family of solutions 
\beq\label{mysol}
U_i^{\lambda H} = 2\gamma^i \log |z| - \log \la i | \Phi^* \exp(2\lambda H) \Phi |i\ra,
\eeq
and the corresponding 
$$
u_i^{\lambda H} = \sum a_{ij} U_j^{\lambda H}
$$
which satisfy 
\beq\label{the eq}
\Delta U_i^{\lambda H} + 4e^{u_i^{\lambda H}} = 4\pi \gamma^i \delta_0.
\eeq

Then the local masses, divided by $\pi$, are 
\beq\label{local mass}
\sigma_i = \frac{1}{\pi} \lim_{r\to 0}\lim_{\lambda\to \infty} \int_{B_r(0)} e^{u_i^{\lambda H}} = \la \omega_i-\tau\omega_i, w_0\ra.
\eeq
\end{theorem}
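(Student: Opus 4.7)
The plan is to reduce the local-mass calculation to a boundary integral via the divergence theorem, and to evaluate that boundary integral by an asymptotic analysis of $U_i^{\lambda H}$ as $\lambda\to\infty$. Integrating \eqref{the eq} over $B_r(0)$ and applying Green's theorem, for every $r>0$ we have
\[
\frac{1}{\pi}\int_{B_r(0)}e^{u_i^{\lambda H}} = \gamma^i - \frac{1}{4\pi}\oint_{\p B_r}\p_\nu U_i^{\lambda H}\,ds,
\]
so it suffices to compute $\lim_{r\to 0}\lim_{\lambda\to\infty}\oint_{\p B_r}\p_\nu U_i^{\lambda H}\,ds$.

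To do this, I fix a $*$-compatible Hermitian metric on $V_i$, so that $\exp(2\lambda H)$ is self-adjoint and diagonal in an orthonormal weight basis, and decompose
\[
\la i|\Phi^*(z)\exp(2\lambda H)\Phi(z)|i\ra = \sum_\mu \|(\Phi(z)|i\ra)_\mu\|^2\, e^{2\lambda\la\mu, H\ra},
\]
the sum running over weights $\mu$ of $V_i$. Since $H = \tau H_0$ for some strictly dominant $H_0 \in C_0$, the weight $\tau\omega_i$ is the unique maximizer of $\la\cdot, H\ra$ on these weights, so for large $\lambda$ the term $\mu = \tau\omega_i$ dominates uniformly on compact subsets of $\bc\setminus\{0\}$. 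Crucially, each weight-$\mu$ component of $\Phi(z)|i\ra$ is an exact $z$-monomial: expanding $\Phi$ via iterated integrals from \eqref{Phi}, every contributing sequence $e_{-\alpha_{j_k}}\cdots e_{-\alpha_{j_1}}|i\ra$ with output weight $\mu$ satisfies $\sum_l \alpha_{j_l} = \omega_i - \mu$, and the associated iterated integral evaluates to $z^{\sum_l \mu_{j_l}}\tilde I_{(j_1,\dots,j_k)}$ with $\tilde I > 0$ and $\sum_l\mu_{j_l} = \la\omega_i - \mu, w_0\ra$. Because the weight space $V_i^{\tau\omega_i}$ is one-dimensional (Weyl-conjugate to the extremal weight $\omega_i$), this forces
\[
(\Phi(z)|i\ra)_{\tau\omega_i} = C_i\, z^{d_i}\, v_{\tau\omega_i},\qquad d_i := \la\omega_i - \tau\omega_i, w_0\ra,
\]
for some constant $C_i \in \bc$ and a fixed generator $v_{\tau\omega_i}$ of $V_i^{\tau\omega_i}$.

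Granting that $C_i \neq 0$, the dominance above yields pointwise on $\bc\setminus\{0\}$
\[
U_i^{\lambda H}(z) + 2\lambda\la\tau\omega_i, H\ra \xrightarrow{\lambda\to\infty} 2(\gamma^i - d_i)\log|z| - 2\log|C_i|,
\]
and this upgrades to $C^\infty_{\mathrm{loc}}(\bc\setminus\{0\})$ convergence by elliptic regularity, using that $\Delta U_i^{\lambda H} = -4e^{u_i^{\lambda H}}$ tends to zero uniformly on compact annuli away from the origin. Therefore $\oint_{\p B_r}\p_\nu U_i^{\lambda H}\,ds \to 4\pi(\gamma^i - d_i)$ for every fixed $r > 0$, and substitution into the first display gives $\sigma_i = \gamma^i-(\gamma^i-d_i) = d_i = \la\omega_i - \tau\omega_i, w_0\ra$, as desired. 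The main obstacle is the non-vanishing of $C_i$: this asserts that the products of Chevalley matrix coefficients (whose signs may vary) weighted by the positive iterated integrals $\tilde I$ do not cancel in the one-dimensional space $V_i^{\tau\omega_i}$. One approach is to isolate a distinguished lowering sequence adapted to a reduced expression of the shortest $\sigma\in W$ with $\sigma\omega_i=\tau\omega_i$ and show its contribution is not cancelled by the others; a more structural approach identifies $C_i$ with a generalized Plücker coordinate of a generic element of $N$, which vanishes only on a proper subvariety.
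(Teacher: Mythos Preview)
Your overall strategy matches the paper's exactly: reduce to a boundary integral by Green's theorem, expand $\la i|\Phi^*\exp(2\lambda H)\Phi|i\ra$ in an orthonormal weight basis, identify $\tau\omega_i$ as the unique maximizer of $\la\cdot,H\ra$, observe that the $\tau\omega_i$-component of $\Phi(z)|i\ra$ is a monomial $C_i z^{d_i}$ with $d_i=\la\omega_i-\tau\omega_i,w_0\ra$, and then read off the radial derivative. The paper also gets the $o(1)$ control of derivatives directly from the explicit expansion rather than via elliptic regularity, but that is a minor stylistic difference.

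The genuine gap is the one you flag yourself: you do not prove $C_i\neq 0$, and neither of your two suggested approaches is adequate as stated. The ``generalized Pl\"ucker coordinate of a generic element of $N$'' idea cannot close the argument, because $\Phi(1)$ is a \emph{specific} element of $N$, not a generic one; knowing that the bad locus is a proper subvariety tells you nothing about whether this particular point lies on it. The ``distinguished lowering sequence'' idea is only a heuristic and you give no mechanism to preclude cancellation.

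The paper fills this gap in a structural way. First it shows (Proposition~\ref{alg char}) that $\Phi(1)=\psi$, where $\psi\in N_-$ is the unique element with $\psi^{-1}w_0\psi=w_0-f$; this is obtained by conjugating the Kostant normalizer $\psi$ by an explicit Cartan element $Q(z)$ and checking that the result solves \eqref{Phi}. Then $C_i=\{\psi v^{\omega_i},\bar\tau v^{\omega_i}\}=\{(\bar\tau^{*}\psi)v^{\omega_i},v^{\omega_i}\}$, so nonvanishing is equivalent to $\bar\tau^{*}\psi$ lying in the big Gauss cell $G_r=N_-\ch N_+$. This last fact (Proposition~\ref{nonzero coeff}) is deduced from a result of Baumann--Kamnitzer--Knutson \cite{BKK}*{Prop.~8.1}: for $x\in\fh^{\rm reg}$ and $n_x\in N_+$ with $n_x x n_x^{-1}=x-e$, one has $n_{\tau^{-1}x}=y\,n_x\,\bar\tau\,t$ with $y\in N_-$, $t\in\ch$; applying ${}^*$ and rearranging gives $\bar\tau^{*}\psi=(h\psi_1 h^{-1})\,h\,\tilde y\in N_-\ch N_+$. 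Once $\bar\tau^{*}\psi=n_-hn_+$, the matrix element reduces to $\{hv^{\omega_i},v^{\omega_i}\}\neq 0$. Without this Lie-theoretic input (or a genuine substitute), your argument is incomplete.
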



Amazingly, in one key step \eqref{c>0} of the proof, we need to use a purely Lie theoretic result from a recent paper \cite{BKK}.

\section{Lie-theoretic background and results}

To prove our main theorem, we need to study the $\Phi$ in \eqref{Phi} in more detail. It was studied in \cite{KLNW} following \cite{KosToda} and served as a major tool in the proof, especially of \eqref{global mass}. Here we establish one more result in Proposition \ref{alg char} to bring the relationship to \cite{KosToda} even closer, and this enables us to utilize a Lie-theoretic result in Proposition \ref{nonzero coeff} about it. 

Our basic references for Lie theory and representation theory are \cites{Knapp, Humphreys} and \cite{LSbook}. 
Recall that we have chosen the Chevalley basis \eqref{chev basis}. As a result, 
we have 
\begin{equation*}
[e_{\alpha_i}, e_{-\alpha_j}] = \delta_{ij}h_{\alpha_i},\quad  \alpha_i(h_{\alpha_j})=a_{ij},\quad 1\leq i, j\leq n,
\end{equation*}
where $(a_{ij})$ is the Cartan matrix. 

For $E_j = \sum_{k=1}^n a^{kj}h_{\a_k}\in \fh_0$, where $(a^{ij})$ is the inverse Cartan matrix, we have 
	\begin{equation}\label{Ej}
	\a_i(E_j) = \delta_{ij},\quad 1\leq i, j\leq n.
	\end{equation}
Therefore, $w_0\in \fh$ in \eqref{def w0} is given as 
$$w_0= \sum_{i=1}^n\mu_i E_i.$$

We introduce the following subalgebras of $\mathfrak{g}$
\begin{equation}\label{nn}
\mathfrak{n}_- = \oplus_{\alpha \in \Delta^+} \mathfrak{g}_{-\alpha},\quad \mathfrak{n}_+ = \oplus_{\alpha \in \Delta^+} \mathfrak{g}_{\alpha}.
\end{equation}

Let $\ch$ be the Cartan subgroup of $G$ corresponding to the complex Cartan subalgebra $\fh$. Denote the subgroups of $G$ corresponding to $\fn=\fn_-$ and $\fn_+$ by $N=N_-$ and $N_+$. Then by the Gauss decomposition (see \cite{KosToda}*{Eq. (2.4.4)}), there exists an open and dense subset $G_r\subset G$, called the regular part, such that 
\begin{equation}\label{Gdec}
G_r = N_- \ch N_+.
\end{equation}
Clearly $G_r$ contains the identity element of $G$. 

Let ${}^* :\fg \to \fg$ be the conjugate-linear transformation defined in terms of the Chevalley basis \eqref{chev basis} by  
\begin{equation}\label{dual} 
{}^*|_{\fh_0} = \text{Id}|_{\fh_0}, 
\quad e_\a^* = e_{-\a},\ \a\in \Delta.
\end{equation}
We also let ${}^*:G\to G$ denote the lift of ${}^*$ on $\fg$ \cite{KosToda}*{eq. (3.1.9)} such that 
\beq\label{* on gp}
(\exp x)^* = \exp x^*, \quad x\in \fg,
\eeq
where $\exp:\fg\to G$ is the exponential map. Note that if $a, b\in G$, one has $(ab)^*=b^*a^*$ by \cite{KosToda}*{eq. (3.1.8)}. 

\begin{remark} For $G=SL_{n+1}\bc$, ${}^*$ is the usual conjugate transpose for a standard choice of Chevalley basis. 
\end{remark}

We also have the following Iwasawa decomposition \cite{Knapp}*{Theorem 6.46}  
\begin{equation}\label{Iwasawa}
G = KAN,
\end{equation}
where $K$ is a maximal compact subgroup of $G$,
$A$ is an abelian subgroup corresponding to $\fh_0$, and $N$ is as above corresponding to $\fn=\fn_-$ . 
The subgroups $A$ and $N$ are simply-connected. An element $F$ belongs to $K$ if and only if $F^* F = Id$.

Define the following
\begin{align}
f &= \sum_{i=1}^n e_{-\a_i}\in \fg_{-1} := \oplus_{i=1}^n \fg_{-\a_i},\label{def f}\\
\zeta &= \sum_{i=1}^n z^{\gamma_i} e_{-\a_i}: \bc\backslash \br_{\leq 0}\to \fg_{-1}, \label{the zeta}\\
\xi &= z\zeta = \sum_{i=1}^n z^{\mu_i} e_{-\a_i} .\label{the xi}
\end{align}
Then \eqref{Phi} becomes $\Phi^{-1} \Phi_z = \zeta$. 

The Weyl group $W$ of a Lie algebra $\fg$ is the finite group generated by the reflections in the simple roots on $\fh'_0$ 
$$
s_i (\beta) = \beta - \frac{2(\beta, \a_i)}{(\a_i, \a_i)} \a_i,\quad \beta\in \fh_0',\ 1\leq i\leq n,
$$
where $(\cdot, \cdot)$ is the inner product on $\fh_0'$ induced from the Killing form on $\fh_0$. 
The Weyl group $W$ maps the root system $\Delta$ to itself. 

There is a dual action of $W$ on $\fh_0$ such that 
\beq\label{W on h}
\la \tau(\beta), \tau(x)\ra = \la \beta, x\ra,\quad \tau\in W,\ \beta\in \fh_0',\ x\in \fh_0.
\eeq

There are lifts of $\tau\in W$ to $\bar \tau \in N_\fh(G)=\{g\in G\,|\, \on{Ad}_g\fh \subset \fh\}$ (see \cite{BKK}*{\S 2.1}) such that 
$$
\tau x = \on{Ad}_{\bar \tau} x,\quad \tau\in W,\ x\in \fh.
$$
Here $\on{Ad}$ is the adjoint action of $G$ on $\fg$. 
For example, one can lift 
the reflection $s_{\a_i}$ in a simple root to 
$$
\bar s_{\a_i} = \exp(e_{\a_i})\exp(-e_{-\a_i}) \exp(e_{\a_i}),
$$
so one can lift all elements in $W$.

Let 
$$\fh^{\rm reg} = \{H\in \fh\,|\, \alpha(H)\neq 0,\ \forall \alpha\in \Delta\}.$$
Clearly $w_0\in \fh^{\rm reg}$ by \eqref{def w0}. Furthermore, if $\tau\in W$ and $x\in \fhr$, then 
\beq\label{still reg}
\tau x\in \fh^{\rm reg},\quad \text{ as }
\la\a, \tau x\ra = \la \tau^{-1}\a, x\ra \neq 0,
\eeq
by \er{W on h} and $\tau^{-1} \a\in \Delta$. 

By \cite{KosToda}*{Lemma 3.5.1} and also  \cite{BKK}*{\S 8.1} (which attributes it to Bourbaki), for each $x\in \fh^{\rm reg}$, there is a unique element $n_x\in N_-$ such that $\on{Ad}_{n_x^{-1}} x = x - f$. 
We also write $\on{Ad}_g x$ as $g x g^{-1}$ for $g\in G$ and $x\in \fg$. 

We have the following algebraic characterization of the $\Phi$ in \eqref{Phi}. 
\begin{proposition}\label{alg char} The unique map $\Psi : \bc\setminus \br_{\leq 0}\to N=N_-$ such that 
\begin{equation}\label{def psi}
\on{Ad}_{\Psi^{-1}} w_0  = w_0 - \xi,
\end{equation}
where $\xi$ is defined in \eqref{the xi}, 
is $\Psi = \Phi$.
\end{proposition}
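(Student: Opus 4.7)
The plan is to verify two things separately: (i) existence, by showing that $\Phi$ itself satisfies (\ref{def psi}), and (ii) uniqueness, by a short centralizer argument exploiting the regularity of $w_0$. The existence step is an ODE computation, while the uniqueness step is essentially algebraic.

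For existence, I would set
\[
F(z) := \Phi(z)^{-1} w_0 \Phi(z) - \bigl(w_0 - \xi(z)\bigr)
\]
and aim to show $F \equiv 0$. First, the boundary data are good: $\Phi(z) \to Id$ as $z\to 0$ by (\ref{Phi}), and $\xi(z) \to 0$ because all exponents $\mu_i$ in (\ref{the xi}) are positive, so $F(z) \to 0$. Next, using $\Phi_z = \Phi \zeta$ from (\ref{Phi}), a direct calculation gives
\[
F_z = [\Phi^{-1} w_0 \Phi, \zeta] + \xi_z = [F, \zeta] + [w_0, \zeta] - [\xi, \zeta] + \xi_z.
\]
The trivial cancellation is $[\xi, \zeta] = z[\zeta, \zeta] = 0$. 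The substantive cancellation is $[w_0, \zeta] + \xi_z = 0$, which follows from $[w_0, e_{-\alpha_i}] = -\alpha_i(w_0) e_{-\alpha_i} = -\mu_i e_{-\alpha_i}$ via (\ref{def w0}), combined with $\xi_z = \sum \mu_i z^{\gamma_i} e_{-\alpha_i}$ (using $\mu_i - 1 = \gamma_i$). That would leave $F_z = [F, \zeta]$, and passing to the conjugate $G := \Phi F \Phi^{-1}$ kills the right-hand side entirely, since a direct check yields $G_z = 0$. Then $G$ is constant with value $\lim_{z\to 0} G = Id \cdot 0 \cdot Id = 0$, forcing $F \equiv 0$.

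For uniqueness, suppose $\Psi_1, \Psi_2 : \bc \setminus \br_{\leq 0} \to N_-$ both satisfy (\ref{def psi}). Then $g := \Psi_2 \Psi_1^{-1}$ takes values in $N_-$ and a short manipulation shows $\on{Ad}_{g^{-1}} w_0 = w_0$, i.e., $g$ centralizes $w_0$. Since $w_0 \in \fhr$ is regular, its centralizer in $G$ is the Cartan subgroup $\ch$; since $N_-$ consists of unipotent elements while $\ch$ is a torus, one has $N_- \cap \ch = \{Id\}$, forcing $g = Id$ and hence $\Psi_1 = \Psi_2$.

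The one place deserving real care is the identity $[w_0, \zeta] + \xi_z = 0$: it is not a computational accident but rather encodes the compatibility between the normalization $\langle \alpha_i, w_0\rangle = \mu_i$ in (\ref{def w0}) and the exponents $\mu_i = \gamma_i + 1$ built into $\xi = z\zeta$. Everything is engineered so that the analytic characterization (\ref{Phi}) and the algebraic characterization (\ref{def psi}) of $\Phi$ agree.
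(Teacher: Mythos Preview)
Your proof is correct, but it takes a genuinely different route from the paper's.

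The paper argues in the opposite direction: it \emph{constructs} $\Psi$ explicitly and then checks the ODE. Concretely, it fixes the unique $\psi\in N_-$ with $\on{Ad}_{\psi^{-1}} w_0 = w_0 - f$, introduces a Cartan-valued map $Q(z)=\exp\big(\sum_j (\log z^{\mu_j})E_j\big)$, observes that $\on{Ad}_{Q^{-1}} f = \xi$, and sets $\Psi = Q^{-1}\psi Q$. A short computation using $Q^{-1}Q_z = \tfrac{1}{z}w_0$ then shows $\Psi^{-1}\Psi_z=\zeta$, so $\Psi=\Phi$ by uniqueness of the ODE. Your approach instead starts from $\Phi$ and verifies \eqref{def psi} via the conservation-law computation $G_z=0$; the key cancellation $[w_0,\zeta]+\xi_z=0$ in your argument is exactly dual to the paper's $Q^{-1}Q_z=\tfrac{1}{z}w_0$. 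What the paper's route buys is the structural formula $\Phi(z)=Q(z)^{-1}\psi\,Q(z)$, which makes $\Phi(1)=\psi$ immediate (this is used later in the proof of Theorem~\ref{main thm} at the step \eqref{c>0}); what your route buys is a cleaner, more self-contained argument that never leaves $\Phi$. For uniqueness, both arguments rest on the same fact (the centralizer of the regular element $w_0$ meets $N_-$ trivially), though the paper imports it from \cite{KosToda} while you sketch it directly.
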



\begin{proof}
We show that $\Psi$ is a holomorphic solution of the equation \eqref{Phi} defining $\Phi$, then it follows that $\Psi=\Phi$. 
From \eqref{def psi}, it is clear that $\lim_{z\to 0}\Psi = Id$ since $\lim_{z\to 0}\xi = 0$. 

Let $\psi$ be the unique element in $N_-$ such that 
\beq\label{lil psi}
\on{Ad}_{\psi^{-1}}w_0 = w_0 - f.
\eeq

Let 
\begin{equation}\label{choose Q}
Q = \exp\Big( \sum_{j=1}^n \big(\log z^{\mu_j}\big) E_j \Big): \bc\setminus \br_{\leq 0}\to \ch,
\end{equation}
where $\ch$ is the Cartan subgroup of $G$. 
Note that a single-valued branch of log can be chosen since $\bc\setminus \br_{\leq 0}$ is simply-connected. 
We have
\begin{equation*} 
\begin{split}
\on{Ad}_{Q^{-1}}e_{-\a_i} &= \exp\Big( \sum_{j=1}^n \big(\log z^{\mu_j}\big) \alpha_i(E_j) \Big) e_{-\a_i}  \\
&= \exp\big( \log z^{\mu_i}\big) e_{-\a_i}= z^{\mu_i} e_{-\a_i},
\end{split}
\end{equation*}
in view of \eqref{Ej}. 
Therefore, 
\beq\label{new conj}
\on{Ad}_{Q^{-1}}f = \xi
\eeq
by \eqref{def f} and \eqref{the xi}. 

Then $Q^{-1}\psi Q\in N_-$ is the unique element $\Psi$ in \eqref{def psi}, since 
\begin{align*}
\on{Ad}_{(Q^{-1}\psi Q)^{-1}} w_0 &= \on{Ad}_{Q^{-1}}\on{Ad}_{\psi^{-1}}\on{Ad}_Q w_0\\
&=  \on{Ad}_{Q^{-1}}\on{Ad}_{\psi^{-1}} w_0\\
&= \on{Ad}_{Q^{-1}}(w_0-f) = w_0-\xi,
\end{align*}
where the equalities follow from $Q\in \ch$ and $w_0\in \fh$, \eqref{lil psi} and \eqref{new conj}. Note that $\Psi=Q^{-1}\psi Q: \bc\setminus \br_{\leq 0}\to N_-$ is evidently holomorphic. 

From \eqref{choose Q}, we have 
\beq\label{qz}
Q_zQ^{-1}=Q^{-1}Q_z=\frac{1}{z}\sum_{j=1}^n \mu_j E_j =\frac{1}{z}w_0.
\eeq
Therefore, \eqref{qz}, \eqref{lil psi} and \eqref{new conj} give 
\begin{align*}
\Psi^{-1}\Psi_z &= (Q^{-1}\psi^{-1} Q)(-Q^{-1} Q_z Q^{-1} \psi Q) + Q^{-1} Q_z\\
&= -\frac{1}{z}Q^{-1}(\psi^{-1} w_0 \psi) Q + \frac{1}{z} w_0\\
&= -\frac{1}{z} Q^{-1}(w_0-f)Q + \frac{1}{z} w_0\\
&= -\frac{1}{z}(w_0 - \xi) + \frac{1}{z} w_0\\
&= \frac{1}{z}\xi = \zeta,
\end{align*}
so $\Psi$ satisfies \eqref{Phi}. Therefore, $\Psi=\Phi$. 
\end{proof}



The following result plays an important role in the proof of our main theorem. It is a modified version of \cite{BKK}*{Prop 8.1}.\footnote{I thank Joel Kamnitzer for answering my question on MathOverflow.}


\begin{proposition}\label{nonzero coeff}
Let $x\in \fh^{\rm reg}\cap \fh_0$, and $\psi\in N_-$ be the unique element such that 
\beq\label{psix}
\psi^{-1} x \psi = x - f.
\eeq
Let $\tau\in W$. 
Then 
$$
\bar \tau^{*} \psi \in G_r, 
$$
that is, it has Gauss decomposition \eqref{Gdec}. 
\end{proposition}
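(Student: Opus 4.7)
\smallskip\noindent\emph{Proof plan.} The plan is to reduce the statement to a nonvanishing property of matrix coefficients in the fundamental representations, and then to bring in the Lie-theoretic input of \cite{BKK}. First I would use the classical Bruhat/Gauss criterion: an element $g \in G$ lies in $G_r = N_-\ch N_+$ if and only if $\la i | g | i \ra \neq 0$ in every fundamental representation $V_i$. (If $g = n_- h n_+$, the fact that $\fn_+$ annihilates $|i\ra$ and that $|i\ra$ is an $h$-eigenvector of eigenvalue $\omega_i(h)$ gives $\la i | g | i \ra = \omega_i(h) \in \bc^\times$; the converse is standard.) The task therefore reduces to showing
\[
\la i\,|\,\bar\tau^* \psi\,|\,i\ra \neq 0, \qquad 1\leq i\leq n.
\]

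Next I would identify how $\bar\tau^*$ acts on weights. Applying ${}^*$ to the identity $\on{Ad}_{\bar\tau} H = \tau H$ for $H\in \fh$, and using the compatibility $(\on{Ad}_g x)^* = \on{Ad}_{(g^*)^{-1}} x^*$ together with the fact that ${}^*$ restricts to complex conjugation on $\fh$ (since it fixes $\fh_0$), one finds $\on{Ad}_{\bar\tau^*} = \tau^{-1}$ on $\fh$. Hence $\bar\tau^*$ sends the weight-$\mu$ subspace of $V_i$ to the weight-$\tau^{-1}\mu$ subspace. (The ambiguity in lifting $\tau$ is harmless, since left multiplication by $\ch$ preserves $G_r$.) Expanding $\psi |i\ra = |i\ra + \sum_{\mu < \omega_i} c_\mu v_\mu$ in weight vectors and applying $\bar\tau^*$, only a term with $\tau^{-1}\mu = \omega_i$, i.e.\ $\mu = \tau\omega_i$, can contribute to the pairing with $\la i|$. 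Since $\tau\omega_i$ is an extremal weight, $\dim V_{i,\tau\omega_i} = 1$, and a short computation (writing $v_{\tau\omega_i}$ as a scalar multiple of $\bar\tau |i\ra$) shows $\la i\,|\,\bar\tau^* v_{\tau\omega_i}\ra \neq 0$. Thus
\[
\la i\,|\,\bar\tau^* \psi\,|\,i\ra \;=\; \kappa_{i,\tau}\, c_{\tau\omega_i}(\psi), \qquad \kappa_{i,\tau}\in \bc^\times,
\]
and the proposition is equivalent to the nonvanishing of each extremal-weight coefficient $c_{\tau\omega_i}(\psi)$.

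Finally, I would invoke \cite{BKK}*{Prop 8.1}: in view of the defining relation $\psi^{-1} x \psi = x - f$ with regular $x \in \fhr \cap \fh_0$, BKK guarantees that each such extremal coefficient $c_{\tau\omega_i}(\psi)$ is nonzero. This is the step I expect to be the main obstacle, since it is the genuinely Lie-theoretic input; the preceding steps are representation-theoretic bookkeeping arranged precisely so that BKK's statement slots in. The only notational subtlety is matching conventions between the present setting (which features $\bar\tau^*$) and \cite{BKK} (likely phrased in terms of $\bar\tau$), and this is handled by the identification $\on{Ad}_{\bar\tau^*} = \tau^{-1}$ derived above.
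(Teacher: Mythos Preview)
Your reductions are sound, but the route differs from the paper's. You pass through the principal--minor criterion for $G_r$, identify $\la i\,|\,\bar\tau^*\psi\,|\,i\ra$ (up to a nonzero scalar) with the coefficient of $\psi|i\ra$ at the extremal weight $\tau\omega_i$, and then plan to cite \cite{BKK} for the nonvanishing of that coefficient. The paper instead uses \cite{BKK}*{Prop.~8.1} \emph{directly as a factorization result}: writing $\psi^* = n_x\in N_+$ and introducing $\psi_1^* = n_{\tau^{-1}x}\in N_+$ (the analogous element for the regular $\tau^{-1}x$), BKK supplies $n_{\tau^{-1}x} = y\,n_x\,\bar\tau\,t$ with $y\in N_-$ and $t\in\ch$; applying ${}^*$ and rearranging yields the explicit Gauss decomposition $\bar\tau^*\psi = (h\psi_1 h^{-1})\,h\,\tilde y \in N_-\ch N_+$, with no detour through matrix coefficients.

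The one soft spot in your outline is the final invocation: as the paper cites it, \cite{BKK}*{Prop.~8.1} is that factorization identity, not a statement about nonvanishing of extremal-weight coefficients. To finish your argument you would still have to convert the factorization into the nonvanishing of $c_{\tau\omega_i}(\psi)$ --- and that conversion is exactly the paper's short manipulation above, followed by the observation that $\la i\,|\,n_- h n_+\,|\,i\ra$ is a nonzero scalar. So the two approaches rest on the same BKK input and are logically equivalent; yours has the virtue of isolating precisely the matrix coefficient that later reappears as the constant $c$ in the proof of the main theorem, while the paper's is shorter here and defers that identification.
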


\begin{proof} By $\tau^{-1} x\in \fhr$ \eqref{still reg}, there exists a unique $\psi_1\in N_-$ such that
\beq\label{psi1tx}
\psi_1^{-1} (\tau^{-1} x) \psi_1 = \tau^{-1} x -f.
\eeq

Applying $*$ to the two identities \eqref{psix} and \eqref{psi1tx}, we have, by \eqref{dual},
\begin{align*}
\psi^* x (\psi^*)^{-1} &= x - e,\\
\psi_1^* (\tau^{-1} x) (\psi^*_1)^{-1} &= \tau^{-1} x - e,
\end{align*}
where $e=\sum_{i=1}^n e_{\a_i}$. 

Therefore $\psi^*=n_x\in N_+,\ \psi_1^*=n_{\tau^{-1} x}\in N_+$ in the notation of \cite{BKK}*{\S 8.1} such that 
\begin{align*}
n_x x n_x^{-1} &= x-e,\\
n_{\tau^{-1} x} (\tau^{-1} x) n_{\tau^{-1} x}^{-1} &= \tau^{-1} x-e,
\end{align*}
(Note that their $\dot e=\sum_{i=1}^n \dot e_i$ where each $\dot e_i$ is a non-zero root vector in $\fg_{\a_i}$. So our $-e$ is one such $\dot e$.)

By their Proposition 8.1, 
there exist $y\in N_-$ and $t\in \ch$ such that 
$$
n_{\tau^{-1} x} = yn_x\bar\tau t, \quad \text{i.e. } \psi_1^* = y \psi^* \bar \tau t.
$$

Applying $*$ to the above equation, we get 
$$
\psi_1 = t^* \bar \tau^{*} \psi y^*.
$$
Therefore, with $h=(t^*)^{-1}\in \ch$ and $\t y=(y^*)^{-1}\in N_+$, we have
$$
\bar\tau^{*} \psi = h\psi_1 \t y = (h \psi_1 h^{-1}) h \t y\in G_r.
$$
\end{proof}


For applications in the next section, let's review some representation theory. 
The integral weight lattice of $\fg$ is $\Lambda_W = \{\beta\in \fh_0'\,|\, \beta(h_{\a_i})\in \bz,\ \forall 1\leq i\leq n\}$. An integral weight $\beta$ is called dominant if $\beta(h_{\a_i})\geq 0$ for all $1\leq i\leq n$. The weight lattice is the lattice in $\fh_0'$ generated by the fundamental weights $\omega_i$ for $1\leq i\leq n$ satisfying
$
\omega_i(h_{\a_j}) = \delta_{ij}.
$

An irreducible representation $\rho$ of $\fg$ on a finite-dimensional complex vector space $V$ has the weight space decomposition $V = \oplus V_\beta$, where $\beta\in \Lambda_W$ and 
\beq\label{wt sp}
V_\beta = \{ v\in V \,|\, \rho(H) (v) = \beta(H) v,\ \forall H\in \fh\}. 
\eeq
We have 
$
\rho(\fg_\a) V_\beta \subset V_{\a + \beta}.
$

A basic theorem \cite{Humphreys}*{\S 20} states that if $\rho$ is an irreducible representation, then there exists a unique highest weight $\nu$ (the usual notation $\lambda$ is used in this paper for the scaling factor) with a one-dimensional highest weight space $V_\nu$ such that 
\beq\label{highest}
\rho(\fn_+) V_\nu = 0. 
\eeq
All the weights of $V$ are of the form 
\beq\label{other wts}
\nu - \sum_{j=1}^n m_j \a_j,
\eeq 
where the $m_j$ are nonnegative integers. Furthermore, 
\beq\label{weyl action}
\dim V_{\tau \beta} = \dim V_\beta,\quad \tau\in W,\ \beta\text{ is a weight}.
\eeq

The Theorem of the Highest Weight \cite{Knapp}*{Theorem 5.5} asserts that up to equivalence, the irreducible finite-dimensional complex representations of $\fg$ stand in one-one correspondence with the dominant integral weights which sends an irreducible representation to its 
highest weight. We denote the irreducible representation space corresponding to a dominant weight $\nu$ by $V^\nu$. 

There is a canonical pairing between the dual space $V^*=\on{Hom}(V, \bc)$ and $V$ denoted by $\la w, v \ra \in \bc$ with $v\in V$ and $w\in V^*$. $V^*$ has a \emph{right representation} $\rho^*$ of $\fg$ defined by 
\begin{equation}\label{def right}
\la w \rho^*(X), v\ra = \la w, \rho(X)v\ra,\quad X\in \fg.
\end{equation}
 
The representation corresponding to the $i$th fundamental weight $\omega_i$ is called the $i$th fundamental representation  of $\fg$, which we denote by $V_i$. 
We choose a highest weight vector in $V_i$, and following the physicists \cite{LSbook} we called it by $|i\ra$. 
We choose a vector $\la i |$ in the lowest weight space in $V_i^*$ and require that $\la i | Id | i\ra = 1$ for the identity element $Id\in G$.  
For simplicity, we will omit the notation $\rho$ for the representation. 


Let $G^s$ be a connected and simply-connected Lie group whose Lie algebra is $\fg$. Then all the irreducible representations of $\fg$ lift to representations of $G^s$, and in particular the fundamental representations $V_i$ do. 
In this paper, we can work with a general connected Lie group $G$ whose Lie algebra is $\fg$ instead of only the simply-connected $G^s$. The reason is that 
the simply-connected compact subgroup $K^s$ of $G^s$ is used only in passing. 
Our results are expressed using $N$ and $A$ in the Iwasawa decomposition \eqref{Iwasawa}, and they are simply-connected and the same for a general $G$ and for the simply-connected $G^s$. 

There is a more concrete realization of the dual $V_i^*$ in  \ref{def right}. By the unitary trick, there exists a Hermitian form $\{\cdot, \cdot\}$ on $V_i$ (conjugate linear in the second position) invariant under the compact group $K^s$ of a simply-connected $G^s$. 
The important property of this Hermitian form is that \cite{KosToda}*{Eq. (5.11)} 
\begin{equation}\label{adjoint}
\{ g u , v \} = \{u, g^* v\}, \quad g\in G^s, \ u, v\in V_i. 
\end{equation}


Choose $v^{\omega_i}\in V_i$ to be a highest weight vector for the $i$th fundamental representation, and we require that $\{v^{\omega_i}, v^{\omega_i}\}=1$. 
Then the term in \eqref{good oldie} is, with $g=\Lambda C\Phi$,  
\begin{equation}\label{use vi}
\la i|\Phi^* C^*\Lambda^2 C \Phi|i\ra= \la i | g^* g | i \ra = \{g^* g v^{\omega_i}, v^{\omega_i}\} = \{g v^{\omega_i}, g v^{\omega_i}\} > 0.
\end{equation}

\section{Proof of the main theorem}

Now we turn to the proof of our main theorem. This proof generalizes the proof of \cite{KLNW}*{Theorem 1.1} inspired by Kostant's work \cite{KosToda}, and we need to use Proposition \ref{nonzero coeff}. 
\begin{proof}[Proof of Theorem \ref{main thm}]


We consider the main term in \eqref{mysol}. By \eqref{use vi}, 
\begin{equation}\label{big term}
\begin{split}
 \la i | \Phi^* \exp(2\lambda H) \Phi |i\ra &= \{  \exp(\lambda H) \Phi v^{\om_i}, \exp(\lambda H) \Phi v^{\om_i}\}\\
 &= \sum_{\beta, j} |\{ \exp(\lambda H) \Phi v^{\omega_i}, v^\beta_j\} |^2\\
 &= \sum_{\beta, j} |\{ \Phi v^{\omega_i}, \exp(2\lambda H) v^\beta_j\} |^2\\
 &= \sum_{\beta, j} e^{2\lambda\la \beta,H\ra}|\{ \Phi v^{\omega_i}, v^\beta_j\} |^2,
 \end{split}
 \end{equation}
 where the $\{v_j^\beta\}$ is an orthonormal basis of weight vectors of the $i$-th fundamental representation, with $v_j^\beta$  having weight $\beta$. (The possible multiplicity of the weight space with weight $\beta$ is signified by the subindex $j$.) Then the second equality follows. The third equality follows from \eqref{adjoint} and \eqref{dual} which implies that 
 $$
 \exp(\lambda H)^* = \exp(\lambda H), \quad \lambda>0, H\in \fh_0.
 $$
 The last equality follows from the definition of weight \eqref{wt sp}. 
 
 Since $\lambda\to \infty$, we first find the weight $\beta$ in the $i$th fundamental representation such that $\la \beta, H\ra$ is maximal with $H\in \tau C_0$. 
We claim that this is the case iff $\beta = \tau \omega_i$. 

Indeed, $H=\tau H_0$ with $H_0\in C_0$, then 
\beq\label{top exp}
 \la \beta, H\ra = \la \beta, \tau H_0\ra = \la \tau ^{-1}\beta, H_0\ra = \Big\la \omega_{i} - \sum_{j=1}^n m_j\alpha_{j}, H_0\Big\ra \leq \la \omega_i, H_0\ra,
\eeq
since $\tau^{-1}\beta$ is another weight in the representation $V^{\om_i}$ by \eqref{weyl action} and so it has the form by \eqref{other wts} with $m_j\geq 0$. The last inequality holds by \eqref{dom cham}, and  equality holds if and only if $\tau^{-1}\beta =\omega_i$, i.e. $\beta= \tau\omega_i$. 

Furthermore, \eqref{weyl action} says that the multiplicity of $\beta=\tau\omega_i$ is 1, and we can choose 
$$
v^{\tau\omega_i} = \bar\tau v^{\omega_i},
$$
as a basis. 

Now we need to understand the term 
$$
\{\Phi v^{\omega_i}, \bar\tau v^{\omega_i}\},
$$
using the tool from the proof of \cite{KLNW}*{Theorem 1.1}.

Let $\mathcal{S}$ be the set of all finite sequences
\begin{equation} \label{definitionofs}
s = (i_1,i_2, \cdots, i_k), \quad 1 \leq i_j \leq n,\ k \in \mathbb{Z}_{\geq 0}.
\end{equation}
We denote by $|s|$ the length $k$ of the element $s \in \mathcal{S}.$ For $s \in \mathcal{S},$ we introduce 
\begin{equation*} 
\begin{split}
\varphi(s) &= \sum_{j = 1}^{|s|} \alpha_{i_j}\in \fh_0',\\
\varphi(s,w_0) &
= \langle \varphi(s), w_0\rangle = \sum_{j = 1}^{|s|} \mu_{i_j}>0.
\end{split}
\end{equation*}
For $|s| =0,$ we define $\varphi(s) = 0.$

For $0 \leq j \leq |s| - 1,$ let $s_j \in \mathcal{S}$ be the sequence obtained from $s$ by ``cutting off" the first $j$ terms 
\cite{KosToda})
\begin{equation*} 
s_j = (i_{j + 1}, \cdots, i_{|s|}),
\end{equation*}
and define 
\begin{equation*}
\displaystyle{p(s,w_0) = \prod_{j = 0}^{|s| - 1} \langle \varphi(s_j), w_0\rangle =(\mu_{i_{1}} + \cdots +\mu_{i_{k}})\cdots(\mu_{i_{k-1}} + \mu_{i_{k}}) \mu_{i_{k}}.}
\end{equation*}
We also define $p(s, w_0) = 1$ when $|s| = 0.$ 
Let $U(\mathfrak{n}_{-})$ be the universal enveloping algebra of $\mathfrak{n}_{-}$  in \eqref{nn}. For convenience, write
$e_{-i} = e_{-\alpha_i}$ for $1\leq i\leq n.$ For $s \in \mathcal{S}$ as in \eqref{definitionofs}, define
\begin{equation*} 
e_{-s} = e_{-i_k} \cdots e_{-i_2}e_{-i_1}.
\end{equation*}

By \cite{KLNW}*{Prop. 2.2}, we have 
\begin{equation*} 
\mathcal{Y}(z) = \sum_{s \in \mathcal{S}} \frac{z^{\varphi(s,w_0)}}{p(s,w_0)}e_{-s}, \ \ \ \mbox{for} \ z \in \mathbb{C} \backslash
\mathbb{R}_{\leq 0}.
\end{equation*}
is the same as $\Phi$ in \eqref{Phi} when they act on any representation, in particular on $V^{\omega_i}$. 
Therefore, we have
\begin{equation} \label{computation2}
\left\{\Phi v^{{\omega_i}}, v^{\tau {\omega_i}}\right\} = 
\left(\sum_{s \in \mathcal{S}^{{\omega_i}}_\tau} \frac{c_{s,{\omega_i}}}{p(s,w_0)}\right) z^{\langle{\omega_i} - \tau {\omega_i}, w_0\rangle}=c\cdot z^{\langle{\omega_i} - \tau {\omega_i}, w_0\rangle},
\end{equation}
where $\mathcal{S}^{{\omega_i}}_\tau = \{s \in \mathcal{S} \,|\,  \varphi(s) = {\omega_i}- \tau {\omega_i}\},$
$c_{s, {\omega_i}} = \{e_{-s}v^{{\omega_i}}, v^{\tau {\omega_i}} \},$ and we let $c
$ denote the whole coefficient. 

Letting $z=1$ in \eqref{computation2} and by $\Phi(1)=\Psi(1)=\psi$ from Proposition \ref{alg char}, we see that  
$$
c=\{\psi v^{\om_i}, v^{\tau \om_i}\}=\{\psi v^{\om_i}, \bar \tau v^{\om_i}\}=\{(\bar \tau^*\psi) v^{\om_i}, v^{\om_i}\}.
$$
Proposition \ref{nonzero coeff} says that $\bar\tau^*\psi$ has Gauss decomposition. That is, $
\bar\tau^*\psi = n_- h n_+
$ for some $n_-\in N_-$, $h\in H$, and $n_+\in N_+$. 
Then 
\begin{equation}\label{c>0}
\begin{split}
|c| &= \big|\{(\tau^*\psi) v^{\omega_i}, v^{\omega_i}\}\big| = \big|\{ n_- h n_+ v^{\omega_i}, v^{\omega_i}\}\big|\\
&=\big|\{ h n_+ v^{\omega_i},(n_-)^* v^{\omega_i}\}\big|=\big|\{h v^{\om_i}, v^{\om_i}\}\big|>0,
\end{split}
\end{equation}
where the last equality uses that $(n_-)^*\in N_+$ and $N_+ v^{\omega_i}=v^{\omega}$ by \eqref{highest}. For the last inequality, if $h=\exp(H)$, then 
$h v^{\omega_i} = \exp(\omega_i(H)) v^{\omega_i}$, so 
$$
\big|\{h v^{\om_i}, v^{\om_i}\}\big| = \big|\exp( \omega_i(H)) \{v^{\omega_i},v^{\omega_i}\}\big| = \big|\exp( \omega_i(H))\big|>0.
$$

Putting \eqref{top exp}, \eqref{computation2} and \eqref{c>0} together, \eqref{big term} becomes
$$
 \la i | \Phi^* \exp(2\lambda H) \Phi |i\ra = e^{2\lambda\la \omega_i, H_0\ra} |c|^2 |z|^{2\la \omega_i-\tau \omega_i, w_0\ra}(1+o(1)), \quad \text{as }\lambda\to \infty,
 $$
where $o(1)$ denotes a sum of terms that have $e^{\lambda}$ to negative powers. 



Therefore, the solution in \eqref{mysol} becomes 
\begin{align*}
U_i^{\lambda H} &= 2\gamma^i \log |z| - \log\big(e^{2\lambda\la \omega_i, H_0\ra} |c|^2 |z|^{2\la \omega_i-\tau \omega_i, w_0\ra}(1+o(1)\big)\\
&= 2\gamma^i \log |z| - 2\la \omega_i-\tau \omega_i, w_0\ra\log |z| - 2\lambda \la \omega_i, H_0\ra - 2\log |c| + o(1),
\end{align*}
where $o(1)$ together with its derivatives approach 0 as $\lambda\to \infty$.

Then on the circle $S_r(0)$, using $|z|=r$, the normal derivative 
$$
\frac{\p U_i^{\lambda H}}{\p \nu} = \frac{\p U_i^{\lambda H}}{\p r} =  2\gamma^i \frac{1}{r} - 2\la \omega_i-\tau\omega_i, w_0\ra \frac{1}{r} + o(1).
$$


Applying Green's theorem to the equation \eqref{the eq}, we have 
\begin{align*}
\lim_{\lambda\to \infty} \frac{1}{\pi}\int_{B_r(0)} e^{u_i^{\lambda H}} &= -\frac{1}{4\pi} \lim_{\lambda\to \infty}\int_{B_r(0)} \Delta U_i^{\lambda H} +  \gamma^i \\
&= -\frac{1}{4\pi} \lim_{\lambda\to \infty}\int_{S_r(0)} \frac{\p U_i^{\lambda H}}{\p \nu} ds + \gamma^i\\
&= -\gamma^i + \la \omega_i-\tau\omega_i, w_0\ra + \gamma^i\\
&=   \la \omega_i-\tau\omega_i, w_0\ra .
\end{align*}
This proves the local mass formula \eqref{local mass}. 
\end{proof}



\section{An example}

We present an example for the $A_2$ Lie algebra $\fs\fl_3$ to illustrate our result. We use the standard basis for this Lie algebra, and refer the reader to any standard text on Lie algebras. For example, 
$$
h_{\alpha_1}=E_{11}-E_{22},\quad e_{-\alpha_1}=E_{21},
$$
where $E_{ij}$ is the standard elementary matrix with the only nonzero element as 1 in the $(i,j)$-position. 

For simplicity, let $\gamma_1=0, \gamma_2=0$. 
Then by \eqref{the zeta} and \eqref{Phi},  
$$
\zeta = \begin{pmatrix}
0 & & \\
1 & 0 & \\
0 & 1 & 0
\end{pmatrix},
\quad
\Phi = \begin{pmatrix}
1 & & \\
z & 1 & \\
\frac{z^2}{2} & z & 1
\end{pmatrix}.
$$

The Cartan subalgebra is 
$$
\fh_0 =  \{\on{Diag}(a_1, a_2, a_3)\,|\, a_i\in \br,\ a_1+a_2+a_3=0\}.
$$
Let $L_i\in \fh_0'$ be the linear function taking the $i$-th diagonal entry $a_i$. 

Then the simple roots and fundamental weights are 
\begin{align*}
\alpha_1 = L_1-L_2, \quad \alpha_2=L_2-L_3,\\
\omega_1=L_1,\quad \omega_2=L_1+L_2.
\end{align*}

The dominant Weyl chamber \eqref{dom cham} $C_0$ is
defined by  $a_1>a_2>a_3$. 
The Weyl group is the symmetric group $S_3$ permuting the diagonal entries. Let $\tau=(12)\in S_3$ be the the (1, 2)-transposition. 
Let $H_0=\on{Diag}(2, 1, -3)\in C_0$, then 
$$
H=\tau H_0 = \on{Diag}(1, 2, -3)\in \tau C_0.
$$
So 
$$
A := \exp(2\lambda H) = \on{Diag}(k, k^2, k^{-3}),\quad k = e^{2\lambda}.
$$

We get 
$$
\Phi^*A\Phi = \left[\begin{array}{ccc}
k+{k^{2} |z|^2}+\frac{|z|^{4}}{4 k^{3}} & {k^{2} \bar z}+\frac{z\bar z^2}{2 k^{3}} & \frac{\bar z^{2}}{2 k^{3}} 
\\
 {k^{2} z}+\frac{z^{2}\bar z}{2 k^{3}} & {k^{2}}+\frac{|z|^2}{k^{3}} & \frac{\bar z}{k^{3}} 
\\
 \frac{z^{2}}{2 k^{3}} & \frac{z}{k^{3}} & \frac{1}{k^{3}} 
\end{array}\right]
$$
in \eqref{mysol}. 

Using the first and second principal minors, we get 
\begin{align*}
U &= -\log\la 1|\Phi^*A\Phi |1\ra = -\log( k + k^2 |z|^2 + k^{-3} |z|^4/4)\\
V &= -\log\la 2|\Phi^*A\Phi | 2\ra =  -\log( k^3 + k^{-2} |z|^2 + k^{-1} |z|^4/4).
\end{align*}

Our proof of the main theorem amounts, in this case, to that 
$$
U= -\log(k^2|z|^2(1+o(1)),\quad V=-\log(k^3(1+o(1)),\quad \text{as }k\to \infty.
$$
This through Green's theorem, would imply that the masses are 
\begin{align*}
\sigma_1 &= \la \omega_1-\tau \omega_1, w_0\ra = \la \alpha_1, w_0\ra = \mu_1 = 1,\\
\sigma_2 &= \la \omega_2-\tau \omega_2, w_0\ra = \la 0, w_0\ra = 0.
\end{align*}
This is exactly what we carried out in our proof. 

To further illustrate the result, we can also compute concretely in this case that 
\begin{equation}\label{uv}
\begin{split}
u = 2U-V &= -\log \frac{( k + k^2 |z|^2 + k^{-3} |z|^4/4)^2}{( k^3 + k^{-2} |z|^2 + k^{-1} |z|^4/4)}\\
v = 2V-U &=-\log\frac{( k^3 + k^{-2} |z|^2 + k^{-1} |z|^4/4)^2}{( k + k^2 |z|^2 + k^{-3} |z|^4/4)}.
\end{split}
\end{equation}
A direct computation (using computer) shows that 
\begin{align*}
\int e^u\cdot r\,dr&=\int \frac{4 k^{4} \left(4 k^{5}+k  \,r^{4}+4 r^{2}\right)}{\left(4 k^{5} r^{2}+4 k^{4}+r^{4}\right)^{2}}\cdot r \,dr = \frac{2 k^{4} \left(-k  \,r^{2}-2\right)}{4 k^{5} r^{2}+4 k^{4}+r^{4}}+C,\\
\int e^v\cdot r\,dr&=\int \frac{4 k  \left(4 k^{5} r^{2}+4 k^{4}+r^{4}\right)}{\left(4 k^{5}+k  \,r^{4}+4 r^{2}\right)^{2}}\cdot r\,dr = \frac{-4 k^{5}-2 r^{2}}{4 k^{5}+k  \,r^{4}+4 r^{2}}+C.
\end{align*}
Therefore, 
\begin{align*}
\lim_{k\to \infty} \int_{B_r(0)} e^u &= 
\lim_{k\to \infty} 2\pi\frac{2 k^{4} \left(-k  \,r^{2}-2\right)}{4 k^{5} r^{2}+4 k^{4}+r^{4}}\bigg|^r_0 = 2\pi\Big(-\frac{1}{2} + 1\Big) = \pi,\\
\lim_{k\to \infty} \int_{B_r(0)} e^v &= \lim_{k\to \infty} 2\pi\frac{-4 k^{5}-2 r^{2}}{4 k^{5}+k  \,r^{4}+4 r^{2}}\bigg|^r_0 = 2\pi(-1+1)=0.
\end{align*}

\begin{remark}
Furthermore, we can also do the usual blowup analysis (see \cite{LYZ}). 

We see from \eqref{uv} 
$$
u(0)=\log k \to \infty,\quad v(0)\to -\infty,\quad \text{as }k\to \infty. 
$$
Let 
\begin{gather*}
\t u (z) = u(e^{-\frac{1}{2}u(0)} z) - u(0) \\
\t v (z) = v(e^{-\frac{1}{2}u(0)} z) - u(0).
\end{gather*}
We see that 
$$
\t u(z) \to -\log(1+|z|^2)^2,\quad \t v(z)\to -\infty,\qquad \text{as }k\to \infty.
$$
The first is the standard solution \eqref{liou sol} of the Liouville equation on the plane, and this also confirms that the local mass is $(1, 0)$. 
\end{remark}



\begin{bibdiv}
\begin{biblist}

\bib{BKK}{article}{
   author={Baumann, Pierre},
   author={Kamnitzer, Joel},
   author={Knutson, Allen},
   title={The Mirkovi\'c-Vilonen basis and Duistermaat-Heckman measures},
   note={With an appendix by Anne Dranowski, Kamnitzer and Calder
   Morton-Ferguson},
   journal={Acta Math.},
   volume={227},
   date={2021},
   number={1},
   pages={1--101},
   issn={0001-5962},
}

\bib{CGS}{article}{
   author={Caffarelli, Luis A.},
   author={Gidas, Basilis},
   author={Spruck, Joel},
   title={Asymptotic symmetry and local behavior of semilinear elliptic
   equations with critical Sobolev growth},
   journal={Comm. Pure Appl. Math.},
   volume={42},
   date={1989},
   number={3},
   pages={271--297},
   issn={0010-3640},
}

\bib{CL0}{article}{
   author={Chen, Wen Xiong},
   author={Li, Congming},
   title={Classification of solutions of some nonlinear elliptic equations},
   journal={Duke Math. J.},
   volume={63},
   date={1991},
   number={3},
   pages={615--622},
   issn={0012-7094},
}

\bib{CNY}{article}{
   author={Cui, Leilei},
   author={Nie, Zhaohu},
   author={Yang, Wen},
   title={Affine Toda system of $A$ and $C^t$ type: compactness and affine Weyl group},
   journal={preprint, available at arXiv:2305.01221}
}

\bib{GNN}{article}{
   author={Gidas, B.},
   author={Ni, Wei Ming},
   author={Nirenberg, L.},
   title={Symmetry of positive solutions of nonlinear elliptic equations in
   ${\bf R}\sp{n}$},
   conference={
      title={Mathematical analysis and applications, Part A},
   },
   book={
      series={Adv. Math. Suppl. Stud.},
      volume={7a},
      publisher={Academic Press, New York-London},
   },
   date={1981},
   pages={369--402},
}

\bib{Humphreys}{book}{
   author={Humphreys, James E.},
   title={Introduction to Lie algebras and representation theory},
   series={Graduate Texts in Mathematics},
   volume={Vol. 9},
   publisher={Springer-Verlag, New York-Berlin},
   date={1972},
   pages={xii+169},
}

\bib{KLNW}{article}{
   author={Karmakar, Debabrata},
   author={Lin, Chang-Shou},
   author={Nie, Zhaohu},
   author={Wei, Juncheng},
   title={Total masses of solutions to general Toda systems with singular
   sources},
   journal={Adv. Math.},
   volume={447},
   date={2024},
   pages={Paper No. 109702, 34},
   issn={0001-8708},
}

\bib{Knapp}{book}{
   author={Knapp, Anthony W.},
   title={Lie groups beyond an introduction},
   series={Progress in Mathematics},
   volume={140},
   edition={2},
   publisher={Birkh\"auser Boston, Inc., Boston, MA},
   date={2002},
   pages={xviii+812},
   isbn={0-8176-4259-5},
}

\bib{KosToda}{article}{
   author={Kostant, Bertram},
   title={The solution to a generalized Toda lattice and representation
   theory},
   journal={Adv. in Math.},
   volume={34},
   date={1979},
   number={3},
   pages={195--338},
   issn={0001-8708},
}

\bib{LSbook}{book}{
   author={Leznov, A. N.},
   author={Saveliev, M. V.},
   title={Group-theoretical methods for integration of nonlinear dynamical
   systems},
   series={Progress in Physics},
   volume={15},
   note={Translated and revised from the Russian;
   Translated by D. A. Leuites},
   publisher={Birkh\"auser Verlag},
   place={Basel},
   date={1992},
   pages={xviii+290},
   isbn={3-7643-2615-8},
}

\bib{LYZ}{article}{
   author={Lin, Chang-Shou},
   author={Yang, Wen},
   author={Zhong, Xuexiu},
   title={A priori estimates of Toda systems, I: the Lie algebras of ${\bf
   A}_n$, ${\bf B}_n$, ${\bf C}_n$ and ${\bf G}_2$},
   journal={J. Differential Geom.},
   volume={114},
   date={2020},
   number={2},
   pages={337--391},
   issn={0022-040X},
}

\bib{PT}{article}{
   author={Prajapat, J.},
   author={Tarantello, G.},
   title={On a class of elliptic problems in ${\Bbb R}^2$: symmetry and
   uniqueness results},
   journal={Proc. Roy. Soc. Edinburgh Sect. A},
   volume={131},
   date={2001},
   number={4},
   pages={967--985},
   issn={0308-2105},
}

\bib{S}{article}{
   author={Schoen, Richard M.},
   title={The existence of weak solutions with prescribed singular behavior
   for a conformally invariant scalar equation},
   journal={Comm. Pure Appl. Math.},
   volume={41},
   date={1988},
   number={3},
   pages={317--392},
   issn={0010-3640},
}

\bib{SY}{book}{
   author={Schoen, R.},
   author={Yau, S.-T.},
   title={Lectures on differential geometry},
   series={Conference Proceedings and Lecture Notes in Geometry and
   Topology},
   volume={I},
   publisher={International Press, Cambridge, MA},
   date={1994},
   pages={v+235},
}

\end{biblist}
\end{bibdiv}

\bigskip
\end{document}